\documentclass{amsart}

\usepackage{amssymb,amsthm,amscd,graphicx}

\newtheorem{thm}{Theorem}[section]
\newtheorem{prop}[thm]{Proposition}
\newtheorem*{thm*}{Theorem}
\newtheorem{defn}[thm]{Definition}
\newtheorem{rmk}[thm]{Remark}

\newtheorem{lem}[thm]{Lemma}
\newtheorem*{cor*}{Corollary}
\newcommand{\inlcomb}[2]{\left(\begin{smallmatrix} #1 \\ #2 \end{smallmatrix}\right)}
\newcommand{\sqcomb}[2]{\left[\!\!\! \begin{array}{c} #1 \\ #2 \end{array} \!\!\!\right]}
\newcommand{\inlsqcomb}[2]{\left[\begin{smallmatrix} #1 \\ #2 \end{smallmatrix}\right]}

\numberwithin{equation}{section}

\newcommand{\cB}{\mathcal{B}}
\newcommand{\cS}{\mathcal{S}}

\newcommand{\NN}{\mathbb{N}}
\newcommand{\id}{\textsf{id}}

\title[Cardinality of Balls]{Cardinality of Balls in Permutation Spaces}
\date{February 28, 2013}

\author[L.P Dinu]{Liviu P. Dinu}
\address{Faculty of Mathematics and Computer Science, University of Bucharest}
\email{ldinu@fmi.unibuc.ro}

\author[C.Zara]{Catalin Zara}
\address{Department of Mathematics, UMass Boston}
\email{catalin.zara@umb.edu}

\begin{document}

\begin{abstract}
For a right invariant distance on a permutation space $S_n$ we give a sufficient condition 
for the cardinality of a ball of radius $R$ to grow polynomially in $n$ for fixed $R$. 
For the distance $\ell_1$ we show that for an integer $k$ the cardinality of a sphere 
of radius $2k$ in $S_n$ (for $n \geqslant k$) is a polynomial of degree $k$ in $n$ and 
determine the high degree terms of this polynomial.
\end{abstract}

\maketitle

\tableofcontents

\section{Introduction}
Let $S_{\infty}$ be the group of permutations of $\NN = \{1, 2, \ldots, n, \ldots\}$ 
that fix all but finitely many entries. For each positive integer $n$ the group $S_n$ 
of permutations of $[n]=\{1, 2, \ldots, n\}$ can be naturally identified with the subgroup 
of $S_{\infty}$ containing the permutations that fix all the entries greater than $n$. 
With these identifications $S_n \subset S_m \subset S_{\infty}$ for all $0 < n < m$.
Let $D\colon S_{\infty} \times S_{\infty} \to [0,\infty)$ be a metric on $S_{\infty}$ for which 
right multiplications are isometries: 
\begin{equation*}
D(uw, vw) = D(u,v)
\end{equation*}
for all $u, v, w \in S_{\infty}$. Such metrics are called \emph{right invariant}; several examples are 
discussed in the survey \cite{dez}. Since permutations in $S_{\infty}$ fix all but finitely many 
entries, the following right invariant metrics are well-defined on $S_{\infty}$:
\begin{itemize}
\item The Hamming distance $H$, counting the number of disagreements, 
\begin{equation*}
H(u,v) = \#\{i\geqslant 1\; | \; u(i) \neq v(i)\}\; .
\end{equation*}
\item For $p \geqslant 1$, the $\ell_p$ distance
\begin{equation*}
\ell_p(u,v) = \biggl( \sum_{i \geqslant 1} |u(i) - v(i)|^p \biggr)^{1/p}\; ;
\end{equation*}
in particular
\begin{equation*}
\ell_1(u,v) = \sum_{i \geqslant 1} |u(i)-v(i)|\; .
\end{equation*}
\item The $\ell_{\infty}$ distance
\begin{equation*}
\ell_{\infty}(u,v) = \max_{i \geqslant 1} |u(i)-v(i)|\; .
\end{equation*}
\item The Cayley distance $T$, with $T(u,v)$ defined as the minimal number of transpositions needed to transform $u$ into $v$.
\item The Kendall distance $I$, with $I(u,v)$ defined as the minimal number of  transpositions of adjacent entries needed to transform $u$ into $v$. 
\end{itemize}

The restriction of a right invariant metric $D$ to $S_n$ is a right invariant metric on $S_n$ and
an immediate consequence of right invariance is that the cardinality of spheres 
and of balls depends only on the radius and is independent of the center.
For a permutation $u$ we define $D(u) = D(\id, u)$, where $\id$ is the identity permutation. For $R>0$, let 
\begin{equation*}
\mathcal{B}_D(R) = \{ u \in S_{\infty} \; | \; D(u) \leqslant R\}
\end{equation*}
be the closed ball of radius $R$ centered at $\id$ and
\begin{equation*}
\mathcal{S}_D(R) = \{ u \in S_{\infty} \; | \; D(u) = R\}
\end{equation*}
the sphere of radius $R$ centered at $\id$. There are at most countably many values of $R$ for which the  sphere $\cS_D(R)$ is not empty.

For $n \geqslant 1$, let 
\begin{equation*}
\cB_{D,n}(R) = \cB_D(R) \cap S_n =  \{ u \in S_{n} \; | \; D(u) \leqslant R\}
\end{equation*}
and 
\begin{equation*}
\cS_{D,n}(R) = \cS_D(R) \cap S_n =  \{ u \in S_{n} \; | \; D(u) = R\}
\end{equation*}
be the corresponding ball and sphere in $S_n$, and
\begin{equation*}
V_{D,n}(R) = \#\cB_{D,n}(R)\quad , \quad A_{D,n}(R) = \# \cS_{D,n}(R)
\end{equation*}
the cardinalities of a ball and the sphere of radius $R$ in $S_n$.

Formulas for $V_{D,n}(R)$ and $A_{D,n}(R)$ (approximate or exact) are known for several 
of the metrics mentioned above. For the Kendall distance $I$, the cardinality of the 
sphere  $\cS_{I,n}(k)$ is polynomial of degree $k$ in $n$ (see \cite[p. 15]{tacp3}). 
For the Hamming distance $H$, the cardinality of the sphere  $\cS_{H,n}(k)$ is also 
polynomial of degree $k$ in $n$; more precisely
\begin{equation*}
A_{H,n}(k) =\left\lfloor \frac{k!}{e} \right\rfloor \binom{n}{k}  \; ,
\end{equation*}
since $\left\lfloor \frac{k!}{e} \right\rfloor$ is the number of permutations in $S_k$ 
with no fixed points.

For both metrics $I$ and $H$, the cardinality of a ball of fixed radius $k$ is also a polynomial 
of degree $k$ in $n$. However, for the $\ell_{\infty}$ metric, the situation is different. A consequence 
of \cite[Proposition 4.7.8]{sta-ec1} is that for fixed $k$, the cardinality of a ball of $\ell_{\infty}$-radius $k$ in $S_n$ satisfies a linear recurrence in $n$. 
A lower bound, exponential in $n$, is given in \cite{klo}. 

In this article we give a sufficient condition for the cardinality of a ball of radius $R$ in $S_n$ 
to grow polynomially in $n$ for fixed $R$. The condition is satisfied 
by the Kendall distance $I$ and by the metrics $\ell_p$ with $p \geqslant 1$, 
but not by $\ell_{\infty}$ or by the Hamming distance $H$. 

For the metric $\ell_1$ we show that for an integer $k$, 
$A_{\ell_1, n}(2k)$ is a polynomial $P_k(n)$ of degree $k$ in $n$ (for $n \geqslant k$) and give explicit 
formulas for high degree terms of this polynomial.  A direct consequence of the polynomial growth is that, if we 
randomly pick a permutation in a closed ball of radius $2k$ then,  with probability converging to 1 as $n \to \infty$, 
the permutation is on the sphere of radius $2k$ with the same center:
\begin{equation*}
\lim_{n \to \infty} 
P\Bigl(\ell_1(u)=2k \; | \; \ell_1(u) \leqslant 2k\Bigr) = 1\; .
\end{equation*}

In Section~\ref{sec:split_types} we introduce the \emph{split types}, 
one of the main tools used to prove our results. 
In Section~\ref{sec:poly_growth} we formulate and prove 
the sufficient conditions for the cardinalities of balls and of spheres to grow polynomially, and 
in Section~\ref{sec:lp} we apply these conditions to the metrics $\ell_p$. 
In Section~\ref{sec:pol_form} we compute high degree terms of the polynomials $P_k(n) = A_{\ell_1,n}(k)$. 
Exact formulas for $\ell_1$-spheres of small radii are given in Section~\ref{sec:small_radii}.

\section{Split Types}
\label{sec:split_types}

We use the one-line notation for permutations: 
a permutation $\pi \in S_m$ is denoted by  $\pi = \ (\pi(1)\pi(2) \ldots \pi(m))$. 

\begin{defn}{\rm If 
$\pi \in S_m$ and $\sigma \in S_n$, the \emph{concatenation} $\pi+\sigma$ 
is the permutation 
\begin{equation*}
\pi + \sigma = (\pi(1)\ldots \pi(m) (m+\sigma(1)) \ldots (m + \sigma(n))) \in S_{m+n}\; .
\end{equation*}
}
\end{defn}
For example, $(321)+(21) = (32154)$. 

Concatenation is associative (but not commutative) so we can define the 
concatenation of any finite number of permutations. For example,
\begin{equation*}
(1) + (321) + (12) + (21) = (1432) + (1243) = (14325687)\; .
\end{equation*}

\begin{defn}{\rm  A permutation that can not be written as a concatenation of 
permutations of lower order is called \emph{connected}. 
A permutation that can be written as a 
concatenation of permutations of lower order is called 
\emph{disconnected}. The unique decomposition of a permutation as a concatenation of 
connected permutations is called the \emph{split decomposition} of that permutation.
\footnote{The connected parts of the split decomposition correspond 
to the components of the cycle diagram defined in \cite{eli}.}
}
\end{defn}
For example, $(1432)=(1) + (321)$ is disconnected, but $(321)$ is connected.

An easy way to determine the split decomposition 
is to identify the \emph{cuts}: a permutation $\pi \in S_n$ has a cut 
at $1\leqslant i < n$ if $\pi(j) \leqslant i$ for all $j \leqslant i$ 
and $\pi(j) >i$ for all $j > i$. If $1 \leqslant i_1 < i_2 < \dotsb < i_q < n$ 
are the cuts of $\pi$, then the slices $\pi_0 = \pi([1,i_1))$, 
$\pi_1= \pi([i_1, i_2))$, \ldots, $\pi_q= \pi([i_q, n])$ correspond to 
the permutations in the split decomposition of $\pi$. For example, 
for $\pi = (1432576) \in S_7$ the splits are at 1, 4, and 5 and the 
split decomposition is
\begin{equation*}
(1432576) = (1) + (321) + (1) + (21)\; .
\end{equation*}

\begin{defn}{\rm
The \emph{split type} of a permutation is the permutation obtained by
concatenating the nontrivial permutations of its split decomposition.
}
\end{defn}

For example, the split type of $(14325687) = (1) + (321) + (1)+(1) + (21) \in  S_8$ is 
\begin{equation*}
(321)+(21) = (32154) \in S_5 \; .
\end{equation*}

The necessary and sufficient condition that a permutation $\sigma$ be a split type 
is that the connected parts in the split decomposition 
of $\sigma$ be non-trivial permutations. Let $S_{\infty}'$ be the set of split types and $S_m'$ 
the subset of split types in $S_m$.
If $\sigma \in S_{\infty}'$, we define $q(\sigma)$ 
to be the number of connected parts, and $m(\sigma)$ to be the smallest $m$ such that 
$\sigma \in S_m$. Equivalently, $m(\sigma)$ is the unique value of $m$ such that $\sigma \in S_m'$. 
For example, if $\sigma = (32154)$, then $q(\sigma) = 2$ and $m(\sigma) = 5$. We have $m(\sigma) \geqslant 2q(\sigma)$, or, equivalently,
\begin{equation*}
q(\sigma) \leqslant m(\sigma) - q(\sigma)\; ,
\end{equation*}
with equality only for $\sigma = (21)+(21) + \dotsb + (21)$.

\begin{lem}\label{lem:Mnsigma}
{\rm
Let $\sigma \in S_\infty'$ be a split type. The number of permutations 
$\pi \in S_n$ of split type $\sigma$ is
\begin{equation*}
M(n,\sigma) = \sqcomb{n+q(\sigma)-m(\sigma)}{q(\sigma)} \; ,
\end{equation*}
where $\inlsqcomb{i}{j}$ is the binomial 
coefficient $\inlcomb{i}{j}$ if $0 \leqslant i \leqslant j$
and 0 otherwise.\footnote{If $i < 0 \leqslant  j$, then the binomial coefficient 
$\binom{i}{j}$ is $i(i-1)\ldots (i-j+1)/j! = (-1)^j \inlcomb{j-i-1}{j}$ and is not zero. 
However, $\inlsqcomb{i}{j} = 0$.}
}
\end{lem}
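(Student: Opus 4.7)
The plan is to set up a bijection between permutations of $S_n$ of split type $\sigma$ and weak compositions of $n - m(\sigma)$ into $q(\sigma) + 1$ parts, then apply stars and bars to count the compositions.

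Write the split decomposition of $\sigma$ as $\sigma = \sigma_1 + \sigma_2 + \cdots + \sigma_q$, with each $\sigma_j$ connected of order at least $2$; here $q = q(\sigma)$ and the orders of the $\sigma_j$'s sum to $m = m(\sigma)$. From the definition of split type, a permutation $\pi \in S_n$ has split type $\sigma$ if and only if its own split decomposition has $\sigma_1, \ldots, \sigma_q$ as its nontrivial connected parts, in this order. All remaining parts must then be singletons $(1)$, placed in the $q+1$ slots around the blocks $\sigma_j$. Such a $\pi$ is therefore determined uniquely by a tuple $(a_0, a_1, \ldots, a_q)$ of nonnegative integers (the number of singletons in each slot) with $a_0 + a_1 + \cdots + a_q = n - m$, and conversely every such tuple yields a permutation of $S_n$ of split type $\sigma$.

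Counting these tuples gives the classical stars-and-bars value $\binom{(n-m)+q}{q} = \binom{n+q-m}{q}$, valid once $n \geqslant m$; when $n < m$ no such $\pi$ exists, and this is recorded cleanly by the bracket convention $\inlsqcomb{n+q-m}{q} = 0$ (since $n+q-m < q$). The only point requiring verification---and really the only place the definitions are used in a nontrivial way---is that the insertion procedure does produce a permutation whose split decomposition is the displayed concatenation. This reduces to two checks via the cut criterion: first, every boundary in a concatenation yields a cut of the full permutation; second, the hypothesis that each $\sigma_j$ is connected guarantees that no new cut arises inside $\sigma_j$. Both checks are straightforward, and I expect this to be the main (and essentially only) obstacle in the proof.
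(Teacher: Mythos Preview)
Your argument is correct and is essentially the paper's own proof: both observe that a permutation of split type $\sigma$ is obtained by inserting $n-m$ trivial parts $(1)$ into the $q+1$ gaps around the connected blocks $\sigma_1,\ldots,\sigma_q$, and then count the resulting placements. The only cosmetic difference is that the paper records a placement by the $q$-element subset of $[n+q-m]$ given by the (compressed) leftmost positions of the blocks, whereas you record it by the weak composition $(a_0,\ldots,a_q)$ of $n-m$; these encodings are in obvious bijection and yield the same binomial coefficient.
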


\begin{proof}
Let $\pi \in S_n$ of split type $\sigma = \sigma_1 + \dotsb + \sigma_q \in S_m$. 
We mark the leftmost position of each of the images in $\pi$ of the parts 
$\sigma_1$, \ldots $\sigma_q$, delete the other $m-q$ positions of those images, 
and then compress the result, translating the markings. The marked values form a 
$q$-element subset of $[n\!+\!q\!-\!m]$.
For example, for the permutation $\pi = (1432576)$ of split type
$\sigma = (321)+(21)$, the image of $\sigma$ in $\pi$ is on positions 2,3,4 and 6,7. 
We keep and mark (in bold) 2 and 6 and delete 3,4, and 7. 
The result is the set $\{1, \mathbf{2}, 5, \mathbf{6}\}$, which is 
compressed to $\{1, \mathbf{2}, 3, \mathbf{4}\}$, corresponding to 
the subset $\{2, 4\}$ of $[4]$.

We have therefore associated a $q$-element subset of $[n+q-m]$ to each permutation 
$\pi \in S_n$ of split type $\sigma$, and this map is clearly injective. It is also surjective, 
because the process is reversible: each $q-$element subset of $[n+q-m]$ can be 
expanded to a unique permutation in $S_n$ of split type $\sigma$. 
For example, if $\sigma = (321)+(21)$, then the subset $\{1,3\}$ of $[4]$ comes from $\{\mathbf{1}, 2,\mathbf{3}, 4\}$, 
which is the result of compressing $\{\mathbf{1}, 4, \mathbf{5}, 7\}$. That comes from 
the permutation $(3214657)$. 

Hence the number of permutations in $S_n$ of split  type 
$\sigma = \sigma_1 + \dotsb + \sigma_q \in S_m$ is the same as the number of 
$q$-element subsets of $[n+q-m]$.
\end{proof}
\begin{rmk}{\rm
For $n \geqslant m(\sigma)-q(\sigma)$, $M(n,\sigma)$ is a polynomial of degree $q(\sigma)$ in $n$.
}\end{rmk}

For all distances $D=H, T, I, \ell_p, \ell_{\infty}$, the split type determines the distance to the identity:
if a permutation $u \in S_n$ has split type $\sigma \in S_m$, then 
\begin{equation*}
D(u) = D(\sigma)\; .
\end{equation*}

\begin{defn}\label{def:split_dist}{\rm
A distance $D$ on $S_{\infty}$ is called a \emph{split type distance} if $D(u) = D(v)$ 
for any two permutations $u$ and $v$ having the same split type.
}\end{defn}

Moreover, if $D=I, \ell_1$, then $D(u+v) = D(u) + D(v)$ for all $u,v$. 

\begin{defn}\label{def:add_dist}{\rm
A distance $D$ on $S_{\infty}$ is called a \emph{additive} if $D(u+v) = D(v)+D(v)$.
}\end{defn}

Any additive distance is a split type distance.

\section{Polynomial Growth}
\label{sec:poly_growth}

We can now formulate and prove our sufficient condition for a polynomial growth of 
$V_{D,n}(R)$, the cardinality of  a $D$-ball of radius $R$ in $S_n$.

\begin{thm}\label{th:polygrowth}{\rm
Let $D$ be a right invariant split type distance on $S_{\infty}$ and $R>0$. Suppose there exists an 
integer $N(R)$ such that $m(\sigma) - q(\sigma) \leqslant N(R)$ for every split 
type $\sigma$ with $D(\sigma) \leqslant R$. Then there exists a polynomial $P$
of degree at most $N(R)$ such that $V_{D,n}(R) = P(n)$ for all $n \geqslant N(R)$. 
}\end{thm}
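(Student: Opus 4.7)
The plan is to partition $S_n$ according to split type and apply Lemma~\ref{lem:Mnsigma}. Because $D$ is a split type distance, the value $D(u)$ depends only on the split type $\sigma$ of $u$. Setting $\Sigma_R=\{\sigma\in S_\infty'\mid D(\sigma)\leqslant R\}$ (with the empty split type of $\id$ included, contributing $M(n,\emptyset)=1$), I would write
\begin{equation*}
V_{D,n}(R)=\sum_{\sigma\in\Sigma_R}M(n,\sigma).
\end{equation*}

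First I would verify that $\Sigma_R$ is finite. Combining the inequality $q(\sigma)\leqslant m(\sigma)-q(\sigma)$ recorded just above Lemma~\ref{lem:Mnsigma} with the standing hypothesis $m(\sigma)-q(\sigma)\leqslant N(R)$ gives $m(\sigma)\leqslant 2N(R)$, so $\Sigma_R\subseteq S_{2N(R)}'$ and is therefore finite.

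The remainder is a matter of degrees. The Remark following Lemma~\ref{lem:Mnsigma} says that $M(n,\sigma)=\binom{n+q(\sigma)-m(\sigma)}{q(\sigma)}$ agrees with a polynomial in $n$ of degree $q(\sigma)$ provided $n\geqslant m(\sigma)-q(\sigma)$. For any $\sigma\in\Sigma_R$ we have $q(\sigma)\leqslant m(\sigma)-q(\sigma)\leqslant N(R)$, and the threshold $n\geqslant m(\sigma)-q(\sigma)$ is uniformly implied by $n\geqslant N(R)$. Adding the finitely many polynomial expressions produces a polynomial $P$ of degree at most $N(R)$ with $V_{D,n}(R)=P(n)$ for every $n\geqslant N(R)$.

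There is no substantive obstacle once Lemma~\ref{lem:Mnsigma} is in hand. The split type hypothesis on $D$ is exactly what converts the count $V_{D,n}(R)$ into a sum indexed by split types, while the hypothesis $m(\sigma)-q(\sigma)\leqslant N(R)$ serves three purposes simultaneously: it forces $\Sigma_R$ to be finite, it supplies a uniform threshold on $n$ past which every $M(n,\sigma)$ is polynomial, and via $q(\sigma)\leqslant m(\sigma)-q(\sigma)$ it yields the stated uniform degree bound.
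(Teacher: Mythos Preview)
Your argument is correct and is essentially the paper's own proof: both partition $\cB_{D,n}(R)$ by split type, invoke Lemma~\ref{lem:Mnsigma} for the count $M(n,\sigma)$, use $q(\sigma)\leqslant m(\sigma)-q(\sigma)\leqslant N(R)$ to bound $m(\sigma)\leqslant 2N(R)$ (hence finiteness) and to obtain the uniform threshold $n\geqslant N(R)$ and degree bound. The paper additionally groups the split types by the pair $(m,q)$ via the coefficients $\alpha_D(R,m,q)$, but that is bookkeeping rather than a different idea.
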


\begin{proof}
If $\sigma$ is a split type with $D(\sigma) \leqslant R$, then 
$q(\sigma) \leqslant m(\sigma) - q(\sigma) \leqslant N(R)$ and 
$m(\sigma) \leqslant N(R)+q(\sigma) \leqslant 2N(R)$. Then
\begin{equation*}
V_{D,n}(R) =  \sum_{\sigma \in S_{\infty}'} \; 
\sum_{D(\sigma) \leqslant  R} 
\sqcomb{n+q(\sigma)-m(\sigma)}{q(\sigma)} = \sum_{m \leqslant 2N(R)}\, 
\sum_{\substack{\sigma \in S_m' \\ D(\sigma) \leqslant R}} \sqcomb{n+q(\sigma)-m}{q(\sigma)} \; .
\end{equation*}
Let $\alpha_D(R,m,q)$ be the number of split types $\sigma \in S_m'$ 
with $q(\sigma) = q$ and $D(\sigma) \leqslant R$. Then 
\begin{equation*}
V_{D,n}(R) = \sum_{q=1}^{N(R)} \; \sum_{m=2q}^{q+N(R)} \alpha_D(R,m,q) 
\sqcomb{n+q-m}{q} 
\end{equation*}
and for $n \geqslant N(R)$, 
\begin{equation*}
V_{D,n}(R) = \sum_{q=1}^{N(R)} \; \sum_{m=2q}^{q+N(R)} \alpha_D(R,m,q) 
\binom{n+q-m}{q} \; 
\end{equation*}
is a polynomial in $n$ of degree at most $N(R)$.
\end{proof}

A completely similar argument proves the following condition for a polynomial 
growth of the cardinality of a sphere.

\begin{thm}\label{th:growthA}{\rm
Let $D$ be a right invariant split type distance on $S_{\infty}$ and $R>0$. Suppose there 
exists an integer $N(R)$ such that $m(\sigma) - q(\sigma) \leqslant N(R)$ for every split 
type $\sigma$ with $D(\sigma) = R$. Let $\beta_D(R,m,q)$ be the number of split types 
$\sigma \in S_m'$ such that $D(\sigma) = R$ and $q(\sigma) = q$. Then 
\begin{equation*}
A_{D,n}(R) = \sum_{q=1}^{N(R)} \; \sum_{m=2q}^{q+N(R)} \beta_D(R,m,q) 
\sqcomb{n+q-m}{q} 
\end{equation*}
and for $n \geqslant N(R)$, 
\begin{equation}\label{eq:polySphereD}
A_{D,n}(R) = \sum_{q=1}^{N(R)} \; \sum_{m=2q}^{q+N(R)} \beta_D(R,m,q) 
\binom{n+q-m}{q} \; 
\end{equation}
is a polynomial in $n$ of degree at most $N(R)$.
}\end{thm}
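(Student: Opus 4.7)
The plan is to mirror the proof of Theorem~\ref{th:polygrowth} line by line, replacing the condition $D(\sigma) \leqslant R$ by the stricter condition $D(\sigma) = R$ everywhere. Since $D$ is a split type distance, the value $D(u)$ depends only on the split type $\sigma$ of $u$, so partitioning $\cS_{D,n}(R)$ by split type and invoking Lemma~\ref{lem:Mnsigma} gives
\begin{equation*}
A_{D,n}(R) \;=\; \sum_{\substack{\sigma \in S_\infty' \\ D(\sigma) = R}} M(n,\sigma)
\;=\; \sum_{\substack{\sigma \in S_\infty' \\ D(\sigma) = R}} \sqcomb{n+q(\sigma)-m(\sigma)}{q(\sigma)}\;.
\end{equation*}

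Next, I would use the hypothesis together with the general inequality $q(\sigma) \leqslant m(\sigma) - q(\sigma)$ recorded in Section~\ref{sec:split_types}. This forces $q(\sigma) \leqslant N(R)$ and $m(\sigma) \leqslant q(\sigma) + N(R) \leqslant 2N(R)$, so only finitely many split types contribute. Regrouping by the pair $(m,q)$ and using the definition of $\beta_D(R,m,q)$ rewrites the sum as
\begin{equation*}
A_{D,n}(R) = \sum_{q=1}^{N(R)} \; \sum_{m=2q}^{q+N(R)} \beta_D(R,m,q) \sqcomb{n+q-m}{q}\;,
\end{equation*}
which is the first asserted identity.

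Finally, I would observe that whenever $n \geqslant N(R)$ and the pair $(m,q)$ lies in the displayed range, $n + q - m \geqslant n - N(R) \geqslant 0$, so the bracketed coefficient $\sqcomb{n+q-m}{q}$ agrees with the ordinary binomial coefficient $\binom{n+q-m}{q}$, which is a polynomial in $n$ of degree $q \leqslant N(R)$. Summing finitely many such polynomials yields~(\ref{eq:polySphereD}) and establishes that $A_{D,n}(R)$ is polynomial in $n$ of degree at most $N(R)$ for $n \geqslant N(R)$. There is no serious obstacle: the argument is a transcription of the one for Theorem~\ref{th:polygrowth}, and the only point worth verifying carefully is the threshold at which the bracket coefficients unambiguously become honest binomial coefficients, which is exactly the role played by the bound $n \geqslant N(R)$.
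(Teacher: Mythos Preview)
Your proposal is correct and is exactly the approach the paper intends: it explicitly says that ``a completely similar argument'' to Theorem~\ref{th:polygrowth} proves Theorem~\ref{th:growthA}, and your write-up is precisely that transcription with $D(\sigma)\leqslant R$ replaced by $D(\sigma)=R$.
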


To determine the coefficient $\beta_D(R, m, q)$ by brute force, 
one would need to consider all permutations in $S_{2N(R)}$. 
The situation is better for additive distances.

\begin{defn}{\rm
Let $k$ be a positive integer. By a \emph{composition} of $k$ we mean an 
expression of $k$ as an ordered sum of positive integers, and by a \emph{$q-$composition} 
we mean a composition that has exactly $q$ terms. (See \cite{sta-ec1}.)
}\end{defn}

It is easy to see that the number of $q$-compositions 
of $k$ is $\binom{k-1}{q-1}$: a $q$-composition of $k$ is a way to break an array
of $k$ blocks into $q$ non empty parts. The first part must start at 1, but the remaining 
$q\!-\!1$ parts can start at any $q\!-\!1$ places in the remaining $k\!-\!1$ blocks.

\begin{prop}{\rm
Let $D \colon S_{\infty} \to \NN\cup \{0\}$ be an additive right invariant distance on $S_{\infty}$. Then 
\begin{equation}\label{eq:betagen}
\beta_D(k,m,q)= \sum_{(k_1, \ldots, k_q)}  \sum_{(m_1, \ldots, m_q)} \prod_{i=1}^q \beta_D(k_i,m_i,1) \; ,
\end{equation}
where the sums are over of $q-$compositions $(k_1, \ldots, k_q)$ of $k$
and $q-$compositions $(m_1, \ldots, m_q)$ of $m$.  
}\end{prop}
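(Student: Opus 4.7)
The plan is a clean bijective argument that exploits the uniqueness of the split decomposition together with the additivity hypothesis on $D$. A split type $\sigma \in S_m'$ with $q(\sigma)=q$ and $D(\sigma)=k$ has a unique decomposition $\sigma = \sigma_1 + \sigma_2 + \dotsb + \sigma_q$ with each $\sigma_i$ connected and non-trivial, hence itself a split type with $q(\sigma_i)=1$. Setting $m_i = m(\sigma_i)$ and $k_i = D(\sigma_i)$, the task is to show that $(\sigma_1,\ldots,\sigma_q) \mapsto \sigma_1 + \dotsb + \sigma_q$ defines a bijection between tuples of connected split types with prescribed parameters and split types counted by $\beta_D(k,m,q)$.

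First I would verify that $(m_1,\ldots,m_q)$ is a $q$-composition of $m$, which is immediate from the fact that concatenation of permutations in $S_{m_i}$ lies in $S_{m_1 + \dotsb + m_q}$, together with $m_i \geqslant 2$ for a non-trivial connected piece. Then I would verify that $(k_1,\ldots,k_q)$ is a $q$-composition of $k$: additivity gives $D(\sigma) = \sum_i D(\sigma_i)$, so $\sum_i k_i = k$, and each $k_i \geqslant 1$ because each $\sigma_i \neq \id$ and $D$ takes positive integer values on non-identity permutations. Given a choice of such compositions, the number of tuples $(\sigma_1,\ldots,\sigma_q)$ with $m(\sigma_i)=m_i$, $q(\sigma_i)=1$, $D(\sigma_i)=k_i$ is by definition $\prod_{i=1}^q \beta_D(k_i,m_i,1)$, and summing over all valid pairs of compositions gives \eqref{eq:betagen}.

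For the inverse direction, one checks that concatenating any tuple of non-trivial connected split types with the stated parameters yields a split type $\sigma$ with $m(\sigma)=m$, $q(\sigma)=q$, and, by additivity once more, $D(\sigma)=k$; by uniqueness of the split decomposition the two maps are mutually inverse. The only thing to watch — rather than a real obstacle — is the range of the summation: the proposition allows arbitrary $q$-compositions of $m$, and entries with $m_i=1$ are automatically harmless since a connected split type cannot lie in $S_1$, so $\beta_D(k_i,1,1)=0$ and such terms contribute nothing. With this remark the two sides of \eqref{eq:betagen} match term by term.
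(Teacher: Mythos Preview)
Your bijective argument is correct and is precisely the natural proof: decompose a split type $\sigma \in S_m'$ with $q(\sigma)=q$ and $D(\sigma)=k$ into its $q$ connected non-trivial pieces, record the parameters $(m_i,k_i)$ of each piece using additivity of $D$, and observe that this sets up a bijection with tuples counted by the right-hand side. Your verification that each $k_i \geqslant 1$ (since $D$ is a metric with values in $\NN\cup\{0\}$ and each $\sigma_i \neq \id$) and your remark that terms with $m_i=1$ vanish automatically are both on point.

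The paper itself states this proposition without proof, so there is nothing to compare against; your argument fills the gap cleanly. One tiny cosmetic point: you might also note that terms with $k_i=0$ (which a $q$-composition into positive parts excludes anyway) would likewise contribute nothing, since $\beta_D(0,m_i,1)=0$ for $m_i \geqslant 2$ --- but this is already handled by your choice to sum over genuine compositions.
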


If the metric $D$ is additive, then all the coefficients $\beta_D(k,m,q)$ 
can be computed from $\beta(k',m',1)$ for all $k' \leqslant k$ and $m' \leqslant m$, and that can be done inside $S_{k+1}$.

\section{Polynomial Growth for $\ell_p$}
\label{sec:lp}

In this section we prove that the metrics $\ell_p$ for $p \geqslant 1$ and the Kendall metric $I$ satisfy 
the condition of Theorems~\ref{th:polygrowth} and \ref{th:growthA}, hence the spheres and the balls grow polynomially in $n$ for a fixed radius.

\begin{prop}\label{prop:poly_lp}{\rm
If $\sigma$ is a split type and $\ell_p(\sigma) \leqslant R$, then 
\begin{equation*}
m(\sigma) - q(\sigma) \leqslant \frac{1}{2} \,R^p \; .
\end{equation*}
}\end{prop}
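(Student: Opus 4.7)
The plan is to use the additivity of $\ell_p^p$ under concatenation together with a lower bound on $\ell_p$ for connected permutations. First, I would write the split decomposition $\sigma = \sigma_1 + \cdots + \sigma_q$, where each $\sigma_i \in S_{m_i}$ is connected with $m_i \geqslant 2$, so that $m(\sigma) = \sum_i m_i$ and $q(\sigma) = q$. A direct check shows $\ell_p(u+v)^p = \ell_p(u)^p + \ell_p(v)^p$, so iterating gives $\ell_p(\sigma)^p = \sum_i \ell_p(\sigma_i)^p$. It would therefore suffice to prove $\ell_p(\sigma_i)^p \geqslant 2(m_i - 1)$ for each connected part; summing then yields $R^p \geqslant 2\sum_i (m_i - 1) = 2(m(\sigma) - q(\sigma))$, as required.

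Next, to prove $\ell_p(\tau)^p \geqslant 2(m-1)$ for a connected $\tau \in S_m$, I would first reduce to the case $p=1$: since $|\tau(k)-k|$ is a non-negative integer and $p \geqslant 1$, one has $|\tau(k)-k|^p \geqslant |\tau(k)-k|$, hence $\ell_p(\tau)^p \geqslant \ell_1(\tau)$. So it suffices to establish $\ell_1(\tau) \geqslant 2(m-1)$ when $\tau$ is connected.

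For the $\ell_1$ bound, I would use the standard identity
\begin{equation*}
\ell_1(\tau) = 2 \sum_{j=1}^{m-1} d_j(\tau), \qquad d_j(\tau) = \#\{k \leqslant j : \tau(k) > j\},
\end{equation*}
which one proves by writing $(\tau(k)-k)^+ = \#\{j : k \leqslant j < \tau(k)\}$ and using the bijection-based identity $\#\{k \leqslant j : \tau(k) > j\} = \#\{k > j : \tau(k) \leqslant j\}$ to equate the positive and negative parts of the sum. By the definition recalled in Section~\ref{sec:split_types}, $j$ is a cut of $\tau$ precisely when $d_j(\tau) = 0$, and $\tau$ connected means it has no cuts; thus $d_j(\tau) \geqslant 1$ for every $1 \leqslant j \leqslant m-1$, giving $\ell_1(\tau) \geqslant 2(m-1)$.

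The principal step is the identity expressing $\ell_1$ via the threshold counts $d_j$; once it is in hand, everything else is bookkeeping. The connectedness hypothesis enters exactly through the equivalence ``$j$ is a cut of $\tau$'' $\Longleftrightarrow$ ``$d_j(\tau)=0$'', which is a direct translation of the cut condition. The only mild subtlety is the reduction from $\ell_p$ to $\ell_1$, which relies crucially on the integrality of $|\tau(k)-k|$; this is why the argument gives a bound in $R^p$ rather than in $R$.
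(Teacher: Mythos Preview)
Your proof is correct. The reduction to the connected case via additivity of $\ell_p^p$ under concatenation and then from $\ell_p$ to $\ell_1$ via integrality of $|\tau(k)-k|$ match the paper's opening moves (the paper performs these two reductions in the opposite order, but that is immaterial). Where you diverge is in establishing $\ell_1(\tau) \geqslant 2(m-1)$ for a connected $\tau \in S_m$. You invoke the threshold-count identity $\ell_1(\tau) = 2\sum_{j=1}^{m-1} d_j(\tau)$ and observe that $d_j(\tau)=0$ is precisely the cut condition at $j$, so connectedness forces each $d_j \geqslant 1$. The paper instead argues by induction on the number of cycles of $\tau$: for a single cycle it bounds $\ell_1$ below by twice the span via the triangle inequality along the cycle, and the inductive step exploits the overlap of ranges of linked cycles.

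Your argument is shorter and sidesteps cycle structure entirely. The paper's approach, on the other hand, yields extra information that is reused later: it identifies the equality case $\ell_1(\tau)=2(m-1)$ as occurring precisely when $\tau$ is a single \emph{monotone} cycle, and this characterization is what drives the computations of $\beta(2k,m,q)$ in Lemmas~\ref{lem:maxcycles} and~\ref{lem:coeffs}. Your identity also pins down equality (all $d_j=1$), but translating that condition into the cycle language needed for those later counts would require an additional step.
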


\begin{proof}
If $\ell_p(\sigma) \leqslant R$, then
\begin{equation*}
\ell_1(\sigma) \leqslant \ell_p(\sigma)^p \leqslant R^p  \; ,
\end{equation*}
and therefore it suffices to prove the statement for the case $p=1$. 

Suppose that $\sigma \in S_m$ is a split type with split decomposition 
$\sigma = \sigma_1 + \dotsb + \sigma_q$ and such that $\ell_1(\sigma) \leqslant r$. 
We need  to show that $m-q \leqslant r/2$. It suffices to prove that for every 
\emph{connected} split type $\sigma$ we have 
\begin{equation}\label{eq:boundsm}
m(\sigma) -1 \leqslant \frac{1}{2} \, \ell_1(\sigma)\; .
\end{equation}

We prove \eqref{eq:boundsm} by induction on the number of cycles of $\sigma$.

Suppose $\sigma$ is a cycle $c \colon i_1 = 1 \to i_2 \to \dotsb \to i_t = m \to \dotsb \to i_s$.
Then
\begin{align}%\label{eq:updown}
\ell_1(\sigma) = &  (|i_1-i_2| + \dotsb + |i_{t-1}-i_t|) + 
(|i_t-i_{t+1}| + \dotsb + |i_{s}-i_1|) \geqslant \nonumber  \\
\geqslant & 2(i_t-i_1) = 2(m-1) \; ,\nonumber
\end{align}
and that implies \eqref{eq:boundsm}. The equality occurs if and only if the cycle $c$ is \emph{monotone}, 
\begin{equation*}
1=i_1 < i_2 < \dotsb < i_{t-1} < i_t =m > i_{t+1} > \dotsb > i_{s} > i_1=1\; ,
\end{equation*}
\emph{i.e.} the values occurring in $c$ between $i_1=1$ and $i_t=m$ appear 
in increasing order, and the values from $i_t=m$ to $i_1=1$ appear in decreasing order. 

Suppose now that \eqref{eq:boundsm} is valid for all connected permutations with at most 
$r$ cycles, and let  $\sigma$ be a connected permutation with $r+1$ cycles $c_1, \ldots, c_{r+1}$, 
ordered in increasing order of their minimal elements. 
Let $a=\min(c_{r+1})$, $b=\max(c_{r+1})$, and
$\sigma'=(c_1, \ldots, c_r)$, $a'=\min\{i\; | \; \sigma'(i) \neq i\}$, 
$b'=\max\{ i \; | \; \sigma'(i) \neq i\}$. Then $m = \max(b,b') > \min(b,b')> \max(a,a') > \min(a,a') = 1$.
Moreover, the split type of $\sigma'$ is a connected permutation in $S_{m'}$, where $m'=b'-a'+1$.

Using the induction hypothesis we obtain
\begin{align*}%\label{eq:ineqcyclechain}
\ell_1(\sigma) = & \ell_1(\sigma')+\ell_1(c_{r+1}) \geqslant 2(m'-1)+2(b-a) >  \\
> &  2(\max(b,b')-\min(a,a')) = 2(m-1) \; ,
\end{align*}
so the inequality \eqref{eq:boundsm} follows by induction. 
\end{proof}

Theorem~\ref{th:polygrowth} implies that the cardinality $V_{\ell_p,n}(R)$ 
of a ball of fixed radius $R$ with respect to the distance $\ell_p$  grows polynomially in $n$, with 
the degree of the polynomial not exceeding $R^p/2$. For the distance $\ell_1$ we can be more precise.

\begin{thm}{\rm
Let $k$ be a positive integer.  For $n \geqslant k$, we have
\begin{equation*}
A_{\ell_1,n}(2k) = \binom{n-k}{k} + \text{positive terms of degree at most } k-1 \; ,\; 
\end{equation*}
\begin{equation*}
V_{\ell_1,n}(2k) = \binom{n-k}{k} + \text{positive terms of degree at most } k-1\; .
\end{equation*}
}\end{thm}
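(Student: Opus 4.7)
The plan is to read off both identities from Theorem~\ref{th:growthA} and Theorem~\ref{th:polygrowth}, using Proposition~\ref{prop:poly_lp} to bound the summation ranges, and then to pin down the unique degree-$k$ contribution.

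First, Proposition~\ref{prop:poly_lp} applied with $p=1$ shows that every split type $\sigma$ with $\ell_1(\sigma) \leqslant 2k$ satisfies $m(\sigma) - q(\sigma) \leqslant k$, so Theorems~\ref{th:polygrowth} and \ref{th:growthA} apply with $N(2k) = k$. For $n \geqslant k$ this gives
\begin{equation*}
A_{\ell_1, n}(2k) = \sum_{q=1}^{k}\sum_{m=2q}^{q+k} \beta_{\ell_1}(2k, m, q) \binom{n+q-m}{q},
\end{equation*}
and an analogous identity for $V_{\ell_1, n}(2k)$ with non-negative integer coefficients $\alpha_{\ell_1}(2k, m, q)$ counting split types in the closed ball.

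Next, I would isolate the degree-$k$ term. Taking $q = k$ in the outer sum forces $m = 2k$ since the range $2q \leqslant m \leqslant q+k$ collapses, so the only degree-$k$ contribution is $\beta_{\ell_1}(2k, 2k, k)\binom{n-k}{k}$. A split type $\sigma \in S_{2k}'$ with $q(\sigma) = k$ and $\ell_1(\sigma) = 2k$ saturates the bound $m(\sigma) - q(\sigma) \leqslant \ell_1(\sigma)/2$. Revisiting the proof of Proposition~\ref{prop:poly_lp}, the connected estimate $m-1 \leqslant \ell_1/2$ is strict unless the connected part is a single monotone cycle, and the inductive step for more than one cycle is also strict; equality for the full split type therefore forces each of the $k$ connected parts of $\sigma$ to be a single monotone cycle of size at least $2$. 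Since their sizes sum to $2k$, each part has size exactly $2$ and equals $(21)$, so $\sigma = (21)+\cdots+(21)$ is the unique such split type and $\beta_{\ell_1}(2k,2k,k)=1$. The same reasoning applies to $V$: $\ell_1(\sigma) \leqslant 2k$ together with the lower bound $\ell_1(\sigma) \geqslant 2(m-q) = 2k$ forces $\ell_1(\sigma) = 2k$, hence $\alpha_{\ell_1}(2k,2k,k)=1$ as well.

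Finally, for $q \leqslant k-1$ each summand $\beta_{\ell_1}(2k, m, q)\binom{n+q-m}{q}$ (and the analogue for $\alpha_{\ell_1}$) is a polynomial in $n$ of degree $q \leqslant k-1$ with a non-negative integer coefficient, which gives both requested decompositions. The one real obstacle is the equality-case analysis isolating the unique split type $(21)+\cdots+(21)$; once that is in hand the rest is straightforward bookkeeping around the formulas provided by Theorems~\ref{th:polygrowth} and \ref{th:growthA}.
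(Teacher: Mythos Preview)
Your proposal is correct and follows essentially the same route as the paper: apply Proposition~\ref{prop:poly_lp} to get $N(2k)=k$, invoke Theorems~\ref{th:polygrowth} and~\ref{th:growthA}, and then identify the unique split type contributing to the degree-$k$ term. The only cosmetic difference is that the paper pins down the extremal split type via the elementary inequality $m(\sigma)\geqslant 2q(\sigma)$ (equality forcing every connected part to have size~$2$), whereas you reach the same conclusion by tracing the equality case of the $\ell_1$ estimate in Proposition~\ref{prop:poly_lp}; both arguments are valid and equally short.
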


\begin{proof}
If $\sigma$ is a split type with $\ell_1(\sigma)=2k$, then 
$q(\sigma) \leqslant m(\sigma) - q(\sigma) \leqslant k$. The split type 
$\sigma = (21) + \dotsb +(21)$ is the only one with $k$ parts, and $m(\sigma) = 2k$. 
Therefore $\beta_{\ell_1}(2k,m,k) = 1$ if $m=2k$ and 0 otherwise. Theorem~\ref{th:growthA} 
implies the formula for $A_{\ell_1,n}(2k)$, and the formula for $V_{\ell_1,n}(2k)$ is an 
immediate consequence.
\end{proof}

In the rest of this section we discuss results related to other distances.

\begin{rmk}{\rm
If $I$ is the Kendall metric, then $A_{I,n}(k)$ is a polynomial of degree $k$ in $n$ for $n \geqslant k$ 
(see \cite[p. 15]{tacp3}, where an exact formula is also given). We use our result on $\ell_1$ 
to give an alternative proof of this result. By \cite[Theorem~2]{dia-gra} we know 
that $\ell_1(u) \leqslant 2I(u)$, and therefore
\begin{equation*}
S_{I,n}(k) \subset B_{I,n}(k) \subseteq B_{\ell_1,n}(2k)\; .
\end{equation*}
As a consequence, both $A_{I,n}(k)$ and $V_{I,n}(k)$ grow at most polynomially of 
degree $k$ in $n$. If $\sigma$ is a split type and $I(\sigma) \leqslant k$, then 
Proposition~\ref{prop:poly_lp} implies $m(\sigma) - q(\sigma) \leqslant k$; therefore
$q(\sigma) \leqslant k$ and the polynomial growth formula \eqref{eq:polySphereD} is 
valid for $n \geqslant k$. There exists exactly one split type 
$\sigma$ with $I(\sigma) = k$ and $q(\sigma) = k$. This is the split type
$\sigma = (21) + (21) + \dotsb + (21)\; ,$
with $k$ terms in the concatenation, and $m(\sigma) = 2k$. Hence $\beta_I(k, m, k) = 1$ if 
$m=2k$ and 0 otherwise. Therefore for $n \geqslant k$,
\begin{equation*}
A_{I,n}(k) = \binom{n-k}{k} + \text{positive terms of degree at most } k-1 \; ,\; 
\end{equation*}
\begin{equation*}
V_{I,n}(k) = \binom{n-k}{k} + \text{positive terms of degree at most } k-1\; .
\end{equation*}
Hence $A_{I,n}(k) \simeq A_{\ell_1,n}(2k)$ and $V_{I,n}(k) \simeq V_{\ell_1,n}(2k)$ 
up to terms of degree $k\!-\!1$, and therefore the relative errors converge to 0 as $n$ 
goes to $\infty$.

Even though Equation~\ref{eq:polySphereD} does not seem to give a general formula for 
$A_{I,n}(k)$ as in \cite{tacp3}, it  has the advantage that the formulas it generates are 
\emph{positive}: they express $A_{I,n}(k)$ as a sum of positive terms, consistent 
with the fact that $A_{I,n}(k)$ \emph{counts} permutations with certain properties.}
\end{rmk}

\begin{rmk}{\rm
The cardinalities of $\ell_{\infty}$-balls of fixed radii satisfy linear recurrences in $n$ (\cite[Prop. 4.7.8]{sta-ec1}, \cite{kr-sc}). They grow at least 
exponentially in $n$ and an exponential lower bound is also given in \cite{klo}. 
Note that the $\ell_{\infty}$ distance does not satisfy the conditions of Theorems~\ref{th:polygrowth} and \ref{th:growthA}: if $\sigma_r$ is the concatenation of $r$ copies of $(21)$, then $\ell_{\infty}(\sigma_r) = 1$ but $m(\sigma_r) - q(\sigma_r) = r \to \infty$ as $r\to \infty$.
}\end{rmk}

\begin{rmk}{\rm
The Hamming distance shows that the condition is not necessary. Cardinalities of spheres and balls of fixed radii grow polynomially, but for every $r$, the transposition $\sigma$ that 
swaps $1$ 
and $r$ is a split 
type in $S_r$ and $H(\sigma_r) = 2$, even though $m(\sigma_r) - q(\sigma_r) = r-1 \to \infty$ 
as $r\to \infty$. What remains true, however, is that an eventual polynomial growth 
implies a bound on the number of parts of a split type: let $d(R)$ be the degree of $P$. 
The terms of highest degree in \eqref{eq:polySphereD} are not cancelled, hence that highest 
degree 
cannot exceed $d(R)$. Therefore $q(\sigma) \leqslant d(R)$ for all 
split types $\sigma$ with $D(\sigma) \leqslant R$.
}\end{rmk}

\section{High Degree Terms}
\label{sec:pol_form}

In the remaining sections of this paper we focus on the cardinality of spheres for the distance $D=\ell_1$, 
namely on $A_{\ell_1,n}(2k) = A_n(2k)$. For $n \geqslant k$, this number is given by the $k^{th}$-degree polynomial 
\begin{equation}\label{eq:P_k}
P_k(n) = 
\sum_{q=1}^{k} \; \sum_{m=2q}^{q+k} \beta(2k,m,q) 
\binom{n+q-m}{q} = \binom{n-k}{k} + \begin{array}{c} \text{ lower degree} \\ \text{terms}
\end{array} \; .
\end{equation}

In this section we determine high degree terms of this polynomial.  

Maximal distances in $S_n$ and cardinalities of spheres of maximal radius 
have been determined in \cite{dia-gra}. Their results imply the following.

\begin{lem}\label{lem:max_spheres}
{\rm If $m=2r$ is even, then $\beta(2k,2r,q)=0$ if $k>r^2$ and 
\begin{equation*}
\beta(2r^2, 2r, q) = \left\{ 
\begin{array}{cl} 
(r!)^2 , & \text{ if } q=1\\
0 , & \text{ otherwise}.
\end{array} \right.
\end{equation*}

If $m=2r+1$ is odd, then $\beta(2k, 2r+1,q) = 0$ if $k > r^2+r$ and 
\begin{equation*}
\beta(2r^2+2r, 2r+1, q) = \left\{ 
\begin{array}{cl} 
(2r+1)(r!)^2 , & \text{ if } q=1\\
0 , & \text{ otherwise}.
\end{array} \right.
\end{equation*}
}\end{lem}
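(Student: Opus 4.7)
The plan is to compute the maximum of $\ell_1(\sigma)$ on $S_m$, describe and count the maximizers, and verify they are connected split types with $m(\sigma)=m$. The vanishing statements for $k>r^2$ and $k>r^2+r$ will follow from the maximum being $2r^2$ when $m=2r$ and $2r^2+2r$ when $m=2r+1$, and the two explicit formulas come from enumerating the maximizers.

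First I would establish the identity $\ell_1(\sigma) = 2\sum_{i=1}^{m-1} d_i(\sigma)$, where $d_i(\sigma) = |\{j\leqslant i : \sigma(j) > i\}|$, by writing $|\sigma(j)-j|$ as the number of integers strictly between $j$ and $\sigma(j)$ and interchanging the order of summation. Since $d_i(\sigma)\leqslant \min(i,m-i)$ term by term, summation gives the sharp bounds $\ell_1(\sigma)\leqslant 2r^2$ for $m=2r$ and $\ell_1(\sigma)\leqslant 2r^2+2r$ for $m=2r+1$, which immediately yield the vanishing claims.

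For equality one needs $d_i(\sigma) = \min(i,m-i)$ for every $i$. When $m=2r$, imposing this at $i=r$ forces $\sigma(\{1,\ldots,r\}) = \{r+1,\ldots,2r\}$, and this single condition already implies equality at all other $i$; the maximizers are then pairs of independent bijections on $[1,r]$ and $[r+1,2r]$, giving $(r!)^2$ in all. When $m=2r+1$, the constraints at $i=r$ and $i=r+1$ yield $\sigma([1,r])\subseteq[r+1,2r+1]$ and $\sigma([r+2,2r+1])\subseteq[1,r+1]$, which I would show are sufficient. I would then count by letting $a$ be the element of $[r+1,2r+1]$ missing from $\sigma([1,r])$: the case $a=r+1$ contributes $r!(r+1)!$ via two unconstrained bijections, while the case $a\in[r+2,2r+1]$ forces $\sigma(r+1)=a$ and contributes $r\cdot(r!)^2$, collapsing to $r!(r+1)! + r(r!)^2 = (2r+1)(r!)^2$.

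Finally I would verify that every maximizer is a connected split type in $S_m'$. A cut at $i<m$ would demand $\sigma([1,i])=[1,i]$, but the explicit descriptions above force $\sigma([1,r])$ always to contain an element exceeding $i$ for every $i<m$, ruling out cuts after a short case check; moreover $\sigma(1)\neq 1$ and $\sigma(m)\neq m$, so $m(\sigma)=m$. Hence only $q=1$ contributes. The main obstacle I expect is the odd case: verifying that the two derived inclusions are equivalent to maximality, and then organizing the case split on $a$ so that the contributions sum cleanly to $(2r+1)(r!)^2$ without double-counting.
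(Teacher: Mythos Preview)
Your proposal is correct and follows the same overall structure as the paper's proof: determine the maximum of $\ell_1$ on $S_m$, characterize and count the maximizers, and then check that all maximizers are connected split types with $m(\sigma)=m$ so that only $q=1$ contributes.

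The only difference is one of attribution versus derivation. The paper invokes Diaconis--Graham \cite{dia-gra} as a black box for the values of the maximum and the description and count of the maximizing permutations, and then supplies only the connectedness argument. You instead rederive the Diaconis--Graham result from scratch via the crossing identity $\ell_1(\sigma)=2\sum_i d_i(\sigma)$ and the termwise bound $d_i\leqslant\min(i,m-i)$. Your self-contained treatment of the odd case (splitting on the missing value $a\in[r+1,2r+1]$) is in fact more explicit than what the paper records; the paper's stated characterization for odd $m$ contains a typo (the condition ``$\sigma(i)\leqslant r$ for $i>r$'' is impossible by pigeonhole), and your conditions $\sigma([1,r])\subseteq[r+1,2r+1]$, $\sigma([r+2,2r+1])\subseteq[1,r+1]$ are the correct ones. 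So your route buys a cleaner and fully self-contained argument at the cost of reproving a known lemma.
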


\begin{proof}
If $m=2r$ is even, then the maximal distance $2k=2r^2+2r$ is 
achieved by permutations $\sigma \in S_{2r}$ with the property that 
$\sigma(i) >r$ for all $i \leqslant r$, and there are $(r!)^2$ such permutations. 
All of them are connected, because the cycle containing 1 must coincide or overlap 
with the cycle containing $m=2r$, hence all of them are split types with $q=1$.

If $m = 2r+1$ is odd, then the maximal distance is attained 
for permutations $\sigma \in S_{2r+1}$ with the property that $\sigma(i) >r$ 
for $i \leqslant r$ and $\sigma(i) \leqslant r$ for $i > r$. 
There are $(2r+1)(r!)^2$ such permutations. All of them are connected, 
because the cycle containing 1 must 
coincide or overlap with the cycle containing $m=2r+1$,
hence they are all split types with $q=1$.
\end{proof}

For small $m$ the maximal $\ell_1$-distances in $S_m$ are given below.

\begin{center}
\begin{tabular}{|c|c|c|c|c|c|c|}
\hline 
$m$ & 2 & 3 & 4 & 5 & 6 & 7 \\
\hline
\text{2k} & 2 & 4 & 8 & 12 & 18 & 24\\
\hline
\end{tabular}
\end{center}

Therefore:
\begin{itemize}
\item If $k \leqslant 3$ and $\beta(2k,m,1) \neq 0$, then $m = k+1$.
\item If $k \leqslant 5$ and $\beta(2k,m,1) \neq 0$, then $m \geqslant k$.
\item If $k \leqslant 7$ and $\beta(2k,m,1) \neq 0$, then $m \geqslant k-1$.
\item If $k \leqslant 8$ and $\beta(2k,m,1) \neq 0$, then $m \geqslant k-2$.
\item If $k \leqslant 9$ and $\beta(2k,m,1) \neq 0$, then $m \geqslant k-3$.
\end{itemize}

These simple observations allow us to prove the following result.

\begin{lem}\label{lem:boundestim}
{\rm  If $q \geqslant k-8$ and $\beta(2k,m,q) \neq 0$, 
then:
\begin{itemize}
\item either $m \geqslant k+q-3$, or
\item $q=k-8$, $m=2k-12$.  Then $\beta(2k, 2k-12, k-8) = 36(k-8)$, and $k \geqslant 9$. 
\end{itemize}
}
\end{lem}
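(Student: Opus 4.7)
The plan is to apply the additive composition formula~\eqref{eq:betagen}, which for $D=\ell_1$ reads
\[
\beta(2k,m,q) = \sum_{(k_1,\ldots,k_q)} \sum_{(m_1,\ldots,m_q)} \prod_{i=1}^q \beta(2k_i,m_i,1),
\]
the inner sums running over $q$-compositions of $k$ and of $m$, respectively. Setting $e_i = k_i - 1 \geqslant 0$, the hypothesis $q \geqslant k-8$ rewrites as $\sum_i e_i = k - q \leqslant 8$, so in particular every $k_i$ lies in $\{1,\ldots,9\}$, exactly the range controlled by the five bulleted observations preceding the lemma.

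Next I would define a ``deficit'' $\delta(\ell) = 1 - \min\{\, m' - \ell : \beta(2\ell, m', 1) \neq 0\,\}$. Reading the observations off directly,
\[
\delta(1)=\delta(2)=\delta(3)=0,\quad \delta(4)=\delta(5)=1,\quad \delta(6)=\delta(7)=2,\quad \delta(8)=3,\quad \delta(9)=4.
\]
Summing the pointwise bound $m_i \geqslant k_i + 1 - \delta(k_i)$ over $i$ gives $m - k \geqslant q - \sum_i \delta(k_i)$, so the conclusion $m \geqslant k + q - 3$ is equivalent to $\sum_i \delta(k_i) \leqslant 3$, and the whole question reduces to a finite combinatorial check.

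The crux is identifying all compositions with $\sum_i e_i \leqslant 8$ for which $\sum_i \delta(k_i) \geqslant 4$. A short enumeration of the partitions of $0,1,\ldots,8$ (noting that $\delta(k_i)/(k_i-1) \leqslant 1/2$ with equality only at $k_i = 9$, so splitting any amount of excess across several parts strictly reduces the total deficit) isolates the single partition $(e_i)=(8)$, i.e., one $k_{i_0}=9$ and all other $k_j=1$. This is the exceptional case: it forces $q = k-8$, hence $k \geqslant 9$, and saturates every minimum, giving $m_{i_0}=6$ by Lemma~\ref{lem:max_spheres} (applied with $r=3$) and $m_j=2$ otherwise, so $m = 6 + 2(q-1) = 2k-12$.

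For the count in the exceptional case, the position $i_0$ of the singleton $k_{i_0}=9$ can be placed in any of the $q=k-8$ slots, and for each placement the unique admissible $m$-composition contributes
\[
\beta(18,6,1)\cdot\beta(2,2,1)^{q-1} = (3!)^2 \cdot 1 = 36,
\]
using $\beta(18,6,1)=36$ from Lemma~\ref{lem:max_spheres} and $\beta(2,2,1)=1$ from the single element $(21)\in S_2'$. Summing over placements yields $\beta(2k,2k-12,k-8) = 36(k-8)$. The main obstacle I anticipate is making the partition enumeration in paragraph three exhaustive and being sure no distribution of at most $8$ excess units across the parts has been overlooked; once that is verified, the remaining bookkeeping is essentially forced by the bulleted observations and Lemma~\ref{lem:max_spheres}.
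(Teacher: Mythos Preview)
Your proposal is correct and follows the same overall architecture as the paper's proof: both start from the composition formula~\eqref{eq:betagen}, both translate the bulleted observations into lower bounds $m_i \geqslant k_i+1-\delta(k_i)$, both isolate the composition $(9,1,\ldots,1)$ as the unique obstruction, and both finish the count via Lemma~\ref{lem:max_spheres} with $r=3$.

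Where you differ is in the bookkeeping. The paper orders the parts $k_1 \geqslant k_2 \geqslant \dotsb$, derives $k_1+k_2+k_3 \leqslant 11$, $k_1+k_2 \leqslant 10$, $k_1 \leqslant 9$, and then runs through five explicit cases on the sizes of the top parts. You instead observe the single pointwise inequality $\delta(\ell) \leqslant (\ell-1)/2$ for $1 \leqslant \ell \leqslant 9$ and sum it to get $\sum_i \delta(k_i) \leqslant (k-q)/2 \leqslant 4$, identifying the equality case directly. This is a genuine streamlining: it replaces a five-branch case split by one inequality and its equality analysis, and it makes transparent why $k_i=9$ is the only ``expensive'' part. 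Two small points to tighten in a final write-up: (i) equality $\delta(\ell)=(\ell-1)/2$ also holds trivially at $\ell=1$, so the equality case is $k_i \in \{1,9\}$ rather than just $k_i=9$ --- your conclusion is unaffected since $\sum e_i=8$ then forces exactly one $9$; (ii) in the exceptional composition you should state explicitly that $m_{i_0}\in\{6,\ldots,10\}$ and that only $m_{i_0}=6$ yields $m<k+q-3$, rather than jumping straight to ``saturates every minimum''. The paper makes this step explicit.
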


\begin{proof}
By Equation~\eqref{eq:betagen} we have
$$\beta(2k,m,q) = \sum_{(k_1, \ldots, k_q)} 
\sum_{(m_1, \ldots, m_q)} \prod_{i=1}^q \beta(2k_i,m_i,1)$$
with sums over $q-$compositions of $k$ and $m$, respectively. 
Then $\beta(2k,m,q)~\neq~0$ implies that for at least one pair of compositions, 
all the terms $\beta(2k_i,m_i,1)$ are non-zero. Suppose $(k_1, \ldots, k_q)$ and 
$(m_1, \ldots, m_q)$ are compositions of 
$k$ and $m$ respectively such that $\beta(2k_i,m_i,1) \neq 0$ for all 
$i = 1, \ldots, q$. Without loss of generality we can assume 
$k_1 \geqslant k_2 \geqslant \dotsb \geqslant k_q$. Then 
$$k = k_1 + \dotsb + k_q \geqslant k_1+k_2+k_3 + (q-3) \geqslant k_1+k_2+k_3 + k-11\; ,$$
so $k_1 + k_2 + k_3 \leqslant 11$. Similarly $k_1 + k_2 \leqslant 10$ 
and $k_1 \leqslant 9$. Hence at most two parts of the composition 
have $k_i \geqslant 4$,  at most one part has $k_i \geqslant 6$,  and no part exceeds 9.
That shows that only the following situations are possible:
\begin{itemize}
\item $k_1 \leqslant 3$. No part is above 3, hence $m_i = k_i+1$ for all $q$ parts, 
so $m=k+q$.
\item $k_2 \leqslant 3 < k_1 \leqslant 8$. Only one part is above 3, and that 
part is under 8. Then $m_i =k_i+1$ for all but at most one part, 
and $m_1 \geqslant k_1 -2$ for the last part. In this case we have $m \geqslant k+q-3$.
\item $k_2 \leqslant 3 < k_1 =9$. Only one part is above 3, and that 
part is of length 9. Then $q=k-8$, $k_2 = \dotsb = k_q=1$, $m_2=\dotsb = m_q =2$ 
and $6 \leqslant m_1 \leqslant 10$, so $m = m_1 + 2(q-1) = m_1 + 2k -18$. The 
only case when $m < k+q-3 = 2k-11$ is when $m_1=6$, and then $m=2k-12$. Then  
the $q$-compositions of $k$ and $m$ are $(9,1, \ldots, 1)$ and $(6,2,\ldots, 2)$, 
respectively. There are $q=k-8$ possible places for the part of $k$ of 
length $9$,  $\beta(18,6,1) = 36$, and $\beta(2,2,1) =1$, hence 
$\beta(2k, 2k-12, k-8) = 36(k-8)$.
\item $k_3 \leqslant 3 < k_2 =4 \leqslant k_1 =6$. Only two parts are above 3: 
one is $k_1=6$ and the other is $k_2=4$. Then $m_i = k_i +1$ for all but at most 
two parts, and for those two parts $m_2 \geqslant k_2$ and $m_1 \geqslant k_1-1$. 
In this case we have $m \geqslant k+q-3$.
\item $k_3 \leqslant 3 < k_2 \leqslant k_1 \leqslant 5$. Then 
$m_1 \geqslant k_1$, $m_2 \geqslant k_2$, and $m_i \geqslant k_i+1$ for all other 
parts, hence $m\geqslant k+q-2$.
\end{itemize}

Therefore the only case when $m < k+q-3$ is when $q=k-8$ and $m=2k-12$. 
In that situation, $\beta(2k,2k-12, k-8) = 36(k-8)$, hence we must have 
$k \geqslant 9$.
\end{proof}

The following estimate is an immediate consequence of Lemma~\ref{lem:boundestim}.

\begin{lem}\label{th:estimate}{\rm
The $k^{th}$-degree polynomial
\begin{equation*}
Q_k(n) = \sum_{q=1}^k \sum_{m=k+q-3}^{k+q} \beta(2k,m,q) \binom{n-m+q}{q}   + 36(k\!-\!8)\binom{n\!-\!k\!+\!4}{k\!-\!8}  \; 
\end{equation*}
agrees with $P_k$ on terms of degree $k-8$ and higher. Moreover $P_k = Q_k$ for $k \leqslant 9$.
}\end{lem}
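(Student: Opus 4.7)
The plan is to compute $P_k(n) - Q_k(n)$ directly from \eqref{eq:P_k} and the defining formula for $Q_k$, and show the difference has degree at most $k-9$ in $n$. The key observation I would lean on throughout is that each binomial $\binom{n+q-m}{q}$ appearing in \eqref{eq:P_k} is a polynomial of degree exactly $q$ in $n$, so summands with $q \leqslant k-9$ can never contribute to the coefficient of $n^j$ for any $j \geqslant k-8$.

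First I would write
\begin{equation*}
P_k(n) - Q_k(n) = \sum_{q=1}^{k}\; \sum_{m=2q}^{k+q-4} \beta(2k,m,q) \binom{n+q-m}{q}\; -\; 36(k-8)\binom{n-k+4}{k-8},
\end{equation*}
with the inner sum understood to be empty whenever $2q > k+q-4$. Structurally, $Q_k$ simply retains the four top $m$-values $k+q-3,\ldots,k+q$ from $P_k$'s inner sum, plus one handcrafted correction term.

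Next I would split the outer sum at $q = k-8$. For $q \leqslant k-9$ each summand is a polynomial of degree $q < k-8$ in $n$ and is harmless. For $q \geqslant k-8$ and $m$ in the restricted range $2q \leqslant m \leqslant k+q-4$, Lemma~\ref{lem:boundestim} forces $\beta(2k,m,q) = 0$ except at the single exceptional pair $(q,m) = (k-8,2k-12)$, where $\beta = 36(k-8)$. Plugging in the exceptional indices, $\binom{n+(k-8)-(2k-12)}{k-8} = \binom{n-k+4}{k-8}$, so this lone surviving term equals precisely $36(k-8)\binom{n-k+4}{k-8}$ and cancels the correction in $Q_k$. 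What remains is $\sum_{q=1}^{k-9}\sum_{m=2q}^{k+q-4}\beta(2k,m,q)\binom{n+q-m}{q}$, a polynomial of degree at most $k-9$, proving the first claim.

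For the ``moreover'' clause, when $k \leqslant 9$ the residual outer sum is empty (its upper index is $\leqslant 0$) and the correction term also vanishes ($36(k-8)=0$ for $k=8$, and $\binom{n-k+4}{k-8} = 0$ for $k<8$ because its lower index is negative), so $P_k = Q_k$ identically. The one subtle step --- and the most likely source of an off-by-one error --- will be verifying that the shifted indices align so that the unique surviving exceptional term cancels the hand-added correction on the nose; everything else is routine bookkeeping on degrees of binomial polynomials.
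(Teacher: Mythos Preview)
Your proposal is correct and is exactly the natural unpacking of the paper's one-line justification (``immediate consequence of Lemma~\ref{lem:boundestim}''): subtract, split the outer sum at $q=k-8$, kill the high-$q$ terms with Lemma~\ref{lem:boundestim}, and observe that the lone exceptional contribution matches the hand-added correction. One small presentational point: in the ``moreover'' paragraph it would read more cleanly to separate the cases $k=9$ (where the exceptional term and the correction are both nonzero and cancel) from $k\leqslant 8$ (where the exceptional index $q=k-8\leqslant 0$ falls outside the summation range and, correspondingly, the correction term is zero); your argument already handles both, but the phrase ``the correction term also vanishes'' momentarily suggests you are re-cancelling something already cancelled.
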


To determine $Q_k$ we need to determine $\beta(2k,m,q)$ for $m=k+q, \ldots,  k+q-3$.
We start with a technical lemma.

\begin{lem}\label{lem:maxcycles}
{\rm If $\sigma \in S_m$ is a connected split type and $\ell_1(\sigma) = 2k$, then 
$\sigma$ has at most $k+2-m$ cycles.
}\end{lem}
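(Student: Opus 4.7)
The statement is equivalent to the bound $\ell_1(\sigma) \geq 2(m + c - 2)$ for every connected split type $\sigma \in S_m$ with exactly $c$ cycles. My plan is to prove this stronger inequality by induction on $c$, refining the cycle-peeling argument used in Proposition~\ref{prop:poly_lp}. The base case $c = 1$ is precisely the inequality \eqref{eq:boundsm} established there.

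For the inductive step, I mirror the decomposition of Proposition~\ref{prop:poly_lp}. Let $\sigma$ be a connected split type with $c + 1$ cycles, listed in order of increasing minimal element, and denote the last by $\gamma$, with $a = \min \gamma$ and $b = \max \gamma$. Set $\sigma' = \sigma \gamma^{-1}$, the product of the remaining $c$ cycles, and $b' = \max\{i : \sigma'(i) \neq i\}$. Since $\sigma$ is connected we have $\sigma(1) \neq 1$, so $1$ is a moved point of $\sigma'$ and $m' := b' - 1 + 1 = b'$. Granting the fact that the split type of $\sigma'$ is a connected permutation in $S_{m'}$ with exactly $c$ cycles (the same fact used in Proposition~\ref{prop:poly_lp}), the inductive hypothesis yields $\ell_1(\sigma') \geq 2(b' + c - 2)$, and combined with $\ell_1(\gamma) \geq 2(b - a)$ via additivity,
\begin{equation*}
\ell_1(\sigma) = \ell_1(\sigma') + \ell_1(\gamma) \geq 2\bigl(b' + b - a + c - 2\bigr).
\end{equation*}
Now $\max(b, b') = m$ because connectedness of $\sigma$ puts the largest moved position inside some cycle, $a < b$ because $\gamma$ is non-trivial, and $a < b'$ because otherwise $\sigma$ would admit a cut separating $\gamma$ from the earlier cycles. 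In either case $m = b$ or $m = b'$ we obtain $b + b' - a \geq m + 1$, hence $\ell_1(\sigma) \geq 2(m + (c+1) - 2)$. Rearranging $2k = \ell_1(\sigma) \geq 2(m + c - 2)$ then gives the lemma's bound $c \leq k + 2 - m$.

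The main delicate point — and the one I expect to be the chief obstacle — is justifying that the split type of $\sigma'$ is connected in $S_{m'}$. This is where the ordering of cycles by minimal element does real work: any candidate cut of $\sigma'$ at a position $i$ with $1 \leq i < b'$ would force $i < a$, because every cycle of $\sigma'$ has minimal element strictly less than $a = \min \gamma$; but then the support of $\gamma$ sits entirely in $(i, m]$, so the cut at $i$ would persist in $\sigma = \sigma' \gamma$, contradicting connectedness of $\sigma$. Once this sub-claim is in hand, the induction closes cleanly as above.
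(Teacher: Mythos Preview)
Your proof is correct, but the route is genuinely different from the paper's. You argue by induction on the number of cycles, peeling off the cycle $\gamma$ with the largest minimum and applying the inductive bound to the remaining permutation $\sigma'$; this mirrors and sharpens the inductive argument of Proposition~\ref{prop:poly_lp}, now tracking the cycle count. The paper instead gives a one-shot direct estimate: listing the non-trivial cycles by increasing minimum $i_1<\cdots<i_t$ and re-sorting their maxima as $j_1'<\cdots<j_t'$, it uses the ``linked'' inequality $j_s'>i_{s+1}$ (valid because $\sigma$ is connected) to telescope
\[
\ell_1(\sigma)\;\geqslant\;\sum_s 2(j_s-i_s)\;=\;2(m-1)+2\sum_{s=1}^{t-1}(j_s'-i_{s+1})\;\geqslant\;2(m+t-2),
\]
which immediately gives $t\leqslant k+2-m$. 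The paper's argument is shorter and yields the two sources of excess displayed after \eqref{eq:cyclebound}; your argument has the compensating virtue of actually \emph{proving} the connectedness of the split type of $\sigma'$ (the key sub-claim you flag), a fact the paper only asserts in Proposition~\ref{prop:poly_lp}. Your justification of that sub-claim is sound: a cut of $\sigma'$ at $i<b'$ forces some cycle of $\sigma'$ to lie entirely above $i$, and since every such cycle has minimum below $a$ we get $i<a$, so $\gamma$ does not touch $[1,i]$ and the cut would persist in $\sigma$.
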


\begin{proof}
Let $c_1, \ldots, c_t$ be the cycles in $\sigma$, with $t\geqslant 1$.
Let $[i_1,j_1]$, \ldots, $[i_t,j_t]$ be the ranges of those cycles, such that $1=i_1\! <\! i_2\! <\! \dotsb\! < \!i_t$. 
Let $[j_1',\ldots, j_t']$ be the right endpoints of those ranges, 
sorted in increasing order: $\{j_1',\ldots, j_t' \} = \{j_1,\ldots,j_t\} $ and 
$j_1'\! <\! j_2'\! <\! \dotsb\! < \!j_t' = m$.
Since $\sigma$ is connected, its cycles are linked, hence $j_s' > i_{s+1}$ for all $s=1,\ldots, t\!-\!1$. 
Then
\begin{align}\label{eq:cyclebound}
2k  &=  \ell_1(\sigma) = \ell_1(c_1) + \dotsb + \ell_1(c_t) \geqslant 2(j_1-i_1) + \dotsb + 2(j_t-i_t) \nonumber= \\
 &=   2(m-1) + 2(j_1'-i_2) + \dotsb + 2(j_{t-1}'-i_t) \geqslant 2(m-1)+2(t-1)=  \\
 &=  2(m+t-2)\; . \nonumber
\end{align}
Hence $\sigma$ can't have more than $t_0 = k+2-m$ cycles. 
\end{proof}

Let $t$ be the number of cycles of a connected split type $\sigma \in S_m$. 
The excess of $\ell_1(\sigma)$ over the minimum $2(m+t-2)$ occurs from two sources, corresponding to the inequalities in the first two lines of \eqref{eq:cyclebound}:
\begin{enumerate}
\item cycles of $\sigma$ are not monotone, \emph{i.e.} values that occur in the 
part from $i_s$ to $j_s$ are not in increasing order, or values that occur in 
the part from $j_s$ to $i_s$ are not in decreasing order.
\item cycles of $\sigma$ are not minimally linked, \emph{i.e.} $j_s' - i_{s+1}>1$ 
for some endpoints;
\end{enumerate}

For example, the cycle $c=(1,4,2,3,6,5,10,8,9)$ has the blocks $4\to 2 \to 3$ and $6\to 5$
in the part from 1 to 10 but not in increasing order, and the block $8\to 9$ in the 
part from 10 to 1 but not in decreasing order.

We can now compute the coefficients $\beta(2k,m,q)$ for $m=k+q, \ldots,  k+q-3$.

\begin{lem}\label{lem:coeffs}
{\rm
The coefficients $\beta(2k,m,q)$ for $m=k+q, \ldots, k+q-3$ are
\begin{equation}\label{eq:k+q}
\beta(2k,k+q,q) = \binom{k\!-\!1}{q\!-\!1} 3^{k-q} 
\end{equation}
for all  $1 \leqslant q \leqslant k$;
\begin{equation}\label{eq:m=q+k-1}
\beta(2k,k\!+\!q\!-\!1,q) =  4(k\!-\!3) \, \binom{k\!-\!4}{q\!-\!1}\,3^{k-q-3}  
\end{equation}
if  $1 \leqslant q \leqslant k-3$, and 0 otherwise;
\begin{equation}\label{eq:m=q+k-2}
\beta(2k,k\!+\!q\!-\!2,q) =  4(k\!-\!5) \Bigl( (k\!-\!6)\binom{k\!-\!7}{q\!-\!1} + 15 \binom{k\!-\!6}{q\!-\!1} \Bigr) 3^{k-q-6}  \; 
\end{equation}
if $1 \leqslant q \leqslant k-5$, and 0 otherwise;
\begin{equation}\label{eq:m=q+k-3}
\beta(2k, k\!+\!q\!-\!3,q) = 4(k\!-\!7)\binom{k\!-\!8}{q\!-\!1} \bigl(8(k-q)^2 + 60(k-q) - 137 \bigr) 3^{k-q-10}\;
\end{equation}
if $1 \leqslant q \leqslant k-7$, and 0 otherwise.
}\end{lem}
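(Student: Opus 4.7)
The plan is to apply the reduction \eqref{eq:betagen} and parametrize each connected part by its \emph{defect} $d_i := k_i + 1 - m_i$. Since a connected split type in $S_{m_i}$ with $\ell_1 = 2k_i$ satisfies $m_i \leq k_i + 1$ (Lemma~\ref{lem:maxcycles} with $t = 1$), each $d_i \geq 0$, and the constraint $\sum m_i = m = k + q - d$ becomes $\sum d_i = d$. Writing $\beta_d(k') := \beta(2k', k' + 1 - d, 1)$, the reduction takes the form
\begin{equation*}
\beta(2k, k + q - d, q) = \sum_{\substack{d_1 + \cdots + d_q = d \\ d_i \geq 0}} \ \sum_{(k_1, \ldots, k_q)} \ \prod_{i = 1}^q \beta_{d_i}(k_i),
\end{equation*}
with the inner sum over $q$-compositions of $k$. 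The task reduces to computing the atomic quantities $\beta_d$ for $d \in \{0, 1, 2, 3\}$ and evaluating the resulting compositional sum.

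For $d = 0$, a connected split type $\sigma \in S_{k'+1}$ saturates Lemma~\ref{lem:maxcycles}, so $\sigma$ is a single monotone cycle of range $[1, k'+1]$; each of the $k' - 1$ interior positions is independently on the ascending arc, on the descending arc, or a fixed point, giving $\beta_0(k') = 3^{k' - 1}$. For $d \geq 1$, I classify connected split types with $\ell_1 = 2k'$ in $S_{k' + 1 - d}$ by the number $t \leq d + 1$ of nontrivial cycles and by how the total excess $2d$ is distributed between per-cycle non-monotonicity and the linking slack $\sum_s(j_s' - i_{s+1} - 1)$. In every configuration, positions lying inside some cycle's range but not pinned by its backbone contribute a factor of $3$ each (ascending, descending, or fixed), while the pinned positions contribute an explicit small polynomial count. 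Enumerating the cases yields $\beta_1(k') = 4(k' - 3)\, 3^{k' - 4}$ together with analogous closed-form expressions for $\beta_2(k')$ and $\beta_3(k')$ of the shape (polynomial in $k'$)$\, \cdot\, 3^{k' - O(1)}$.

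Substituting these atomic values into the compositional sum, the formulas \eqref{eq:k+q}--\eqref{eq:m=q+k-3} follow by elementary binomial manipulation. For $d = 0$, the identity $\sum_{(k_1, \ldots, k_q)} \prod 3^{k_i - 1} = 3^{k - q}\binom{k - 1}{q - 1}$ immediately gives \eqref{eq:k+q}. For $d \geq 1$, group the compositions by which parts are defective; a distinguished part of size $j$ surrounded by $q - 1$ parts of defect $0$ contributes $\beta_d(j) \cdot 3^{k - j - q + 1}\binom{k - j - 1}{q - 2}$, and summing on $j$ via the identity $\sum_j (j - a)\binom{k - j - 1}{q - 2} = \binom{k - a}{q}$ (combined with $q\binom{k - a}{q} = (k - a)\binom{k - a - 1}{q - 1}$) produces the binomial coefficients $\binom{k - 4}{q - 1}$, $\binom{k - 6}{q - 1}$, $\binom{k - 7}{q - 1}$, $\binom{k - 8}{q - 1}$ appearing in the statement. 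For $d = 2, 3$ one must also sum compositions in which the defect is split among two or more parts, and these mixed contributions combine to give the bracketed sum in \eqref{eq:m=q+k-2} and the quadratic factor in \eqref{eq:m=q+k-3}.

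The principal obstacle is the case analysis used to establish $\beta_d(k')$ for $d \in \{1, 2, 3\}$: one must enumerate all the ways that up to four nontrivial cycles can interact (side by side, nested, or through wider overlaps), while allowing some of them to be non-monotone with excess distributed across multiple detours, and carefully track which positions remain free (factor $3$) and which are pinned by the backbone. Once the atomic values $\beta_d$ are in closed form, the final summation is routine.
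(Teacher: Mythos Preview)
Your proposal is correct and follows essentially the same approach as the paper. The paper likewise reduces to the $q=1$ atomic values via \eqref{eq:betagen}, computes $\beta(2k,k+1,1)=3^{k-1}$ by counting monotone cycles and $\beta(2k,k,1)=4(k-3)3^{k-4}$ by the same two-cycles-versus-one-cycle case analysis you describe, and then extends to general $q$ by the same bijection between contributing composition pairs and $q$-compositions of $k-3$; for the remaining cases $m=k+q-2$ and $m=k+q-3$ the paper simply states that the bookkeeping is analogous but more elaborate, which is exactly your stance. Your ``defect'' parametrization $d_i=k_i+1-m_i$ is a clean notational device for the same decomposition the paper uses implicitly, not a different method.
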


\begin{proof}
To prove \eqref{eq:k+q} we start with $q=1$ and determine $\beta(2k,k+1,1)$ for $k \geqslant 1$.  
If $\sigma \in S_{k+1}$ is a connected 
split type counted by $\beta(2k, k+1, 1)$, then by Lemma~\ref{lem:maxcycles} 
$\sigma$ must be a cycle, 
and by the proof of Proposition~\ref{prop:poly_lp}, this cycle must be monotone: 
the values between 1 and $k+1$ must occur in increasing order, 
and the values between $k+1$ and $1$ in decreasing order. 
For each of the $k-1$ 
values $2, 3, \ldots, k$ we have three possibilities: the value appears on the 
part from 1 to $k+1$, on the part from $k+1$ to 1, or doesn't appear at all 
in the cycle. 
The choices are independent and each set of choices completely determines 
the permutation $\sigma$. Therefore
\begin{equation*}
\beta(2k, k+1, 1) = 3^{k-1}\; .
\end{equation*}

If $(k_1, \ldots, k_q)$ and $(m_1, \ldots, m_q)$ are $q$-compositions 
that appear in the sum \eqref{eq:betagen} and $m = k+q$, then $m_i = k_i+1$ for all $i=1,\ldots, q$ and therefore 
\begin{align*}
\beta(2k, k+q, q) & = \sum_{(k_1, \ldots, k_q)} \prod_{i=1}^q \beta(2k_i,k_i+1,1)  =
\sum_{(k_1, \ldots, k_q)} \prod_{i=1}^q 3^{k_i-1} = \\
& = 3^{k-q} \sum_{(k_1, \ldots, k_q)} 1 = 3^{k-q} \binom{k-1}{q-1}\; .
\end{align*}

To prove the formula \eqref{eq:m=q+k-1} we consider first the case $q=1$ and prove that 
\begin{equation}\label{eq:almmax}
\beta(2k,k,1) = 4(k-3)3^{k-4}\; .
\end{equation}

If $\sigma \in S_{k}$ is a split type such that $\ell_1(\sigma)=2k$, then $\sigma$ can 
have at most 2 cycles.

If $\sigma$ has two cycles $c_1$ and $c_2$, then they have to overlap, 
so $\text{range}(c_1) = [1,j]$ and $\text{range}(c_2) = [i,k]$,  
or $\text{range}(c_1) = [1,k]$ and $\text{range}(c_2) = [i,j]$, 
with $i<j$. Since 
\begin{equation*}
2k=\ell_1(\sigma)=\ell_1(c_1)+\ell_1(c_2) \geqslant 2k + 2(j-i-1) \geqslant 2k \; ,
\end{equation*}
we must have $j=i\!+\!1$ and the cycles have to be monotone. In both cases 
there are $k\!-\!3$ ways of choosing the pair $(i,i\!+\!1)$. For each choice, 
for each of the remaining $k\!-\!4$ values, the cycle it belongs to, if any, is determined. 
There are three possibilities for each of the remaining $k\!-\!4$ values:  
to be on the ascending part of the cycle, on the descending part, or 
to not be in the cycle at all. Consequently, there are $2(k\!-\!3) 3^{k-4}$ split types
with two cycles.

If $\sigma$ has only one cycle $c$, the only way to increase $\ell_1$ by exactly 2 over 
the minimum $2(k-1)$ is to have two adjacent values occurring in the opposite 
order to the ascending/descending part they belong to:
$1\! \to \!i\!+\!1\! \to\! i \!\to\! k$ or $k\!\to\! i \!\to\! i\!+\!1\! \to\! 1$.  In each 
of the two cases there are $k-3$ ways 
of choosing the pair $(i,i+1)$, and for each choice, there are three possibilities 
for each of the remaining $k-4$ values. Hence the number of split types $\sigma \in S_k$
consisting of one cycle such that $\ell_1(\sigma) = 2k$ is $2(k-3)3^{k-4}$. 

Therefore 
\begin{equation*}
\beta(2k,k,1)  = 2(k-3)3^{k-4} + 2(k-3)3^{k-4} = 4(k-3)3^{k-4}\; ,
\end{equation*}
hence \eqref{eq:almmax}. In particular, for $\beta(2k,k,1)$ to be positive, 
we must have $k\geqslant 4$.

Let $(k_1, k_2, \ldots, k_q)$ be a composition of $k$. 
If a composition $(m_1, \ldots, m_q)$ of $m=k+q-1$ contributes to 
$\beta(2k,k+q-1,q)$, then $m_i \leqslant k_i+1$ for all $i=1,\ldots, q$, which implies
\begin{equation*}
k+q-1 = m = m_1+\dotsb + m_q \leqslant (k_1+1)+\dotsb + (k_q+1) = k+q\; .
\end{equation*}
This can only happen is if $m_i=k_i+1$ for $q-1$ values of $i$ 
and $m_{i_0}=k_{i_0}$ for exactly one value $i_0$. But for 
$\beta(2k_{i_0},k_{i_0}, 1)$ to be nonzero, we must have $k_{i_0} \geqslant 4$, 
hence one of the parts of the composition of $k$ must be of order at least 4, which 
implies $q \leqslant k-3$.

We determine all the pairs $(k_1, \ldots, k_q), (m_1, \ldots, m_q)$ of 
compositions of $k$ and $m=q+k-1$ that give nonzero contributions to 
$\beta(2k,k+q-1,q)$ by deciding beforehand which index $i$ 
corresponds to the term with $m_{i}=k_{i}$. Select an index $1 \leqslant i_0 \leqslant q$. Let 
$[(k_1, \ldots, k_q), (m_1, \ldots, m_q)]$ be a pair of partitions with a 
nonzero contribution such that $m_{i_0}=k_{i_0}$. Then $m_i=k_i$ for all $i\neq i_0$, 
hence the partition $(m_1, \ldots, m_q)$ is completely determined by the pair 
$[(k_1, \ldots, k_q), i_0]$. Let $k_i'=k_i$ for all $i\neq i_0$ and 
$k_{i_0}' = k_{i_0}-3 \geqslant 1$. Then $(k_1', \ldots, k_q')$ is a 
composition of $k-3$. The process is reversible, hence the pairs 
$[(k_1, \ldots, k_q), (m_1, \ldots, m_q)]$ are indexed by pairs 
$[(k_1', \ldots, k_q'), i_0]$ consisting of a partition of $k-3$ into 
positive parts and an index $1\leqslant i_0 \leqslant q$. The contribution of such a pair is
\begin{equation*}
\beta(2k_1, \alpha_1+1, 1) \cdot \dotsb \beta(2k_{i_0}, \alpha_{i_0}, 1) \cdot \dotsb \cdot \beta(2k_q, \alpha_q+1, 1) =4k_{i_0}' 3^{k-q-3}\; .
\end{equation*}
The contribution of all pairs from the same composition of $k-3$ is then 
\begin{equation*}
4(k_1'+ \dotsb + k_q')3^{k-q-3} = 4(k-3) 3^{k-q-3}\; ,
\end{equation*}
and there are $\inlcomb{k-4}{q-1}$ such compositions, hence \eqref{eq:almmax}.

The proofs of \eqref {eq:m=q+k-2} and \eqref {eq:m=q+k-3} are similar but the 
bookkeeping is more elaborate.
\end{proof}

\begin{rmk}{\rm
The polynomial 
\begin{equation*}
R_k(n) = \sum_{q=1}^k \beta(2k,k+q,q) \binom{n-k}{q} = \sum_{q=1}^k \binom{k\!-\!1}{q\!-\!1} 3^{k-q} \binom{n-k}{q}\; 
\end{equation*}
agrees with $P_k$ on terms of degree $k\!-\!2$ and higher and is exact for $k \leqslant 3$. The generating function of $R_k$ is
\begin{equation*}%\label{eq:genfunct}
f_k(X) = \sum_{n\geqslant 0} R_k(n) X^n = \frac{X^{k+1}(2X-3)^{k-1}}{(X-1)^{k+1}}\; .
\end{equation*}
}\end{rmk}

\section{Spheres of Small Radius}
\label{sec:small_radii}
 
Combining Lemmas~\ref{th:estimate} and \ref{lem:coeffs} we obtain formulas for $P_k(n)$ for 
$k\leqslant \min(9,n)$. The polynomials for $k \leqslant 6$ are:
\begin{align*}
P_{1}(n) = & \binom{n\!-\!1}{1} = n-1 \\
P_{2}(n) = & \binom{n\!-\!2}{2} + 3\binom{n\!-\!2}{1} = \frac{1}{2}(n^2+n-6)  \\ 
P_3(n) =  & \binom{n\!-\!3}{3}+ 6\binom{n\!-\!3}{2}+ 9\binom{n\!-\!3}{1} = \frac{1}{6}(n^3+6n^2-25n-6)  \\
P_4(n) =  & \binom{n\!-\!4}{4} + 9\binom{n\!-\!4}{3}+ 27\binom{n\!-\!4}{2}+ 27\binom{n\!-\!4}{1}+ 4\binom{n\!-\!3}{1} \\
P_5(n) = & \binom{n\!-\!5}{5}+ 12\binom{n\!-\!5}{4}+ 54\binom{n\!-\!5}{3}+ 108\binom{n\!-\!5}{2}+ 81\binom{n\!-\!5}{1}+\\
& + 8\binom{n\!-\!4}{2}+ 24\binom{n\!-\!4}{1} %=  \frac{1}{120}\left(n^5+25n^4+5n^3-745n^2-126n+2760\right) 
    \\
P_6(n) = & \binom{n-6}{6} + 15 \binom{n-6}{5}+90\binom{n-6}{4}+270\binom{n-6}{3}+405\binom{n-6}{2} +\\
& + 243\binom{n-6}{1}+   12 \binom{n-5}{3}+ 240 \binom{n-5}{2} + 108\binom{n-5}{1} + 20 \binom{n-4}{1}\; .
\end{align*}

We can use the expressions above to compute the cardinality of spheres of 
radius $2k$ in $S_n$ for $k \leqslant 6$ and for \emph{all} $n\geqslant 2$, not just for $n \geqslant k$. 
All we have to do is replace 
the binomial coefficient $\inlcomb{a}{b}$ by 0 if $a<0$. Nothing interesting occurs 
for $k\leqslant 5$: $A_{\ell_1,2k}(n) = P_k(n)$ if $n \geqslant k$ and $A_{\ell_1,2k}(n)=0$, if $n<k$.  
But when $k=6$, then 
$A_{\ell_1,12}(n)=P_{6}(n)$ for $n\geqslant 6$,  and $A_{\ell_1,12}(n)=0$ only for $n\leqslant 4$. 
For $n=5$,
\begin{equation*}
A_{\ell_1,12}(5) = \# \{ u \in S_5\; |\; \ell_1(u) = 12\} = 20
\end{equation*} 
is computed from the expression for $P_6(5)$ by ignoring all but the 
last term.

\end{document}